\newcommand{\abs}[1]{\left\lvert #1 \right\rvert}
\newcommand{\del}[1]{\left( #1 \right)}
\newcommand{\sbr}[1]{\left[ #1 \right]}
\newcommand{\RR}{\mathbb R}
\newcommand{\ZZ}{\mathbb Z}
\newtheorem{theorem}{Theorem}[section]
\newtheorem{proposition}[theorem]{Proposition}
\newtheorem{conjecture}[theorem]{Conjecture}
\newtheorem{lemma}[theorem]{Lemma}
\newcommand{\Div}[1]{\mathop{\mathrm{Div}}\nolimits\del{#1}}
\renewcommand{\div}[1]{\mathop{\mathrm{div}}\nolimits\del{#1}}
\newcommand{\Divo}[1]{\mathop{\mathrm{Div}^0}\nolimits\del{#1}}
\newcommand{\jac}[1]{\mathop{\mathrm{Jac}}\nolimits\del{#1}}
\newcommand{\prin}[1]{\mathop{\mathrm{Prin}}\nolimits\del{#1}}
\newcommand{\im}[1]{\mathop{\mathrm{Im}}\nolimits\del{#1}}
\newcommand{\II}{\mathcal{I}}
\newcommand{\BB}{\mathcal{B}}
\newcommand{\sig}{\sigma}
\newcommand{\outdeg}{\text{outdeg}}
\title{Exponents of Jacobians of Graphs and Regular Matroids}
\author{Hahn Lheem}
\address{Harvard University, Cambridge, MA 02138}
\email{hahnlheem@college.harvard.edu}
\author{Deyuan Li}
\address{Yale University, New Haven, CT 06511}
\email{deyuan.li@yale.edu}
\author{Carl Joshua Quines}
\address{Massachusetts Institute of Technology, Cambridge, MA 02139}
\email{cjq@mit.edu}
\author{Jessica Zhang}
\address{Proof School, San Francisco, CA 94103}
\email{jessicajzhang03@gmail.com}
\begin{document}

\begin{abstract}
   Let $G$ be a finite undirected multigraph with no self-loops. The Jacobian $\jac G$ is a finite abelian group associated with $G$ whose cardinality is equal to the number of spanning trees of $G$. There are only a finite number of biconnected graphs $G$ such that the exponent of $\jac G$ equals $2$ or $3$. The definition of a Jacobian can also be extended to regular matroids as a generalization of graphs. We prove that there are finitely many connected regular matroids $M$ such that $\jac M$ has exponent $2$ and characterize all such matroids.
\end{abstract}

\maketitle

\section{Introduction}

Let $G$ be a finite undirected multigraph. The \emph{Jacobian} $\jac G$ is a group associated with $G$. It can be defined in several equivalent ways, hence why it is also known as the group of components, the critical group, the sandpile group, or the Smith group.

The \emph{wedge sum} of two graphs $G_1$ and $G_2$, denoted by $G_1 \wedge G_2$, is formed by identifying two vertices of the original graphs. It is known that $\jac{G_1 \wedge G_2} = \jac{G_1} \oplus \jac{G_2}$. As any connected graph can be written as the wedge sum of biconnected graphs \cite{bcc}, we are particularly interested in the structure of $\jac G$ when $G$ is biconnected. Recall that a graph is \emph{biconnected} if the induced subgraph formed by removing any vertex is connected; in particular, a graph that is not biconnected can be disconnected by removing a vertex.

We know that $\jac G$ is a finite abelian group, and its cardinality is equal to the number of spanning trees of $G$. We call a positive integer $m$ the \emph{exponent} of $\jac G$ if it is the smallest positive integer $m$ such that $ma=0$ for every $a\in\jac G$. We investigated the following conjecture, made by the proposer Matthew Baker and Farbod Shokrieh:

\begin{conjecture}\label{con:graphexp}
For every positive integer $k$, there are only finitely many biconnected graphs $G$ such that the exponent of $\jac G$ is at most $k$.
\end{conjecture}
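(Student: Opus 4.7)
The plan is to prove the quantitative strengthening that $\exp(\jac G) \to \infty$ as $|V(G)| \to \infty$ over biconnected graphs $G$; this immediately implies the conjecture, since $\exp(\jac G) \leq k$ would then force $|V(G)|$ to be bounded. I would analyze $G$ via its \emph{ear decomposition} $G = C_0 \cup P_1 \cup \cdots \cup P_\ell$, where $C_0$ is a cycle and each $P_i$ is an ear with endpoints on $C_0 \cup P_1 \cup \cdots \cup P_{i-1}$ and internal vertices disjoint from these. Since $|V(G)| = |V(C_0)| + \sum_{i=1}^\ell (|V(P_i)| - 2)$, a large vertex count forces at least one of: $|C_0|$ is large, some ear $P_i$ is long, or the number of ears $\ell$ is large.

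For long cycles and long ears, the strategy is to exhibit a high-order element of $\jac G$ via chip-firing. Given a cycle $C \subseteq G$ of length $L$ (either $C_0$ itself, or the cycle formed by an ear $P_i$ together with a shortest path in $C_0 \cup P_1 \cup \cdots \cup P_{i-1}$ connecting its endpoints), consider the divisor $D = [v] - [u]$ for vertices $u, v$ on $C$. In the isolated cycle $C_L$, $[v] - [u]$ can have order as large as $L$. I would show that a comparable bound persists when $C$ is embedded in $G$, at least when $u, v$ are chosen among the internal degree-$2$ vertices of the ear: such vertices create a \emph{bottleneck} that forces any chip-firing reduction of $D$ to pass through many steps along the ear.

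For the case of many short ears, I would pass to the topological minor $G^*$ obtained by suppressing all degree-$2$ vertices; then $G^*$ has minimum degree at least $3$ and $|V(G^*)|$ is comparable to $\ell$. Known lower bounds of the form $\tau(G^*) \geq \alpha^{|V(G^*)|}$ for some $\alpha > 1$ on minimum-degree-$3$ graphs, combined with the general inequality $\tau(G) \leq \exp(\jac G)^{|V(G)|-1}$ (which follows because $\jac G$ has at most $|V(G)|-1$ invariant factors, each bounded by the exponent), should force $|V(G^*)|$ to be bounded in terms of $\exp(\jac G)$; together with the ear-length bound from the previous step, this bounds $|V(G)|$.

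The \textbf{main obstacle} is making the chip-firing argument quantitative: in an isolated cycle the divisor $[v] - [u]$ has large order, but when the cycle is embedded in $G$, chips can leave via attached edges and re-enter elsewhere, potentially collapsing the order. A delicate accounting is required—perhaps via explicit potential functions, or by restricting to cycles whose non-endpoint vertices are degree $2$ in all of $G$. A secondary difficulty is that the constant $\alpha$ in the minimum-degree-$3$ lower bound is typically close to $1$, so the comparison in the third step may require refining the spanning-tree count to exploit the particular structure of biconnected graphs with bounded exponent, rather than applying generic bounds.
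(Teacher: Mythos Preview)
The paper does \emph{not} prove this conjecture in full; it settles only the cases $k=2$ and $k=3$, and by a completely different and much shorter route. The paper's argument is: Dhar's burning algorithm shows directly that the divisor $i([v]-[q])$ is $q$-reduced for $0\le i<\deg_G(v)$, so $\exp(\jac G)\ge \Delta(G)$. For $k=2$ this forces $G$ to be a cycle, hence $G=C_2$; for $k=3$ one additionally invokes a cited lemma from \cite{gjr14} characterising when equality $\exp(\jac G)=\Delta(G)$ holds in a biconnected graph. Your proposal therefore targets a strictly stronger result than anything the paper establishes.

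Your long-ear/long-cycle step is in the spirit of the paper's degree lemma and is plausible: an induced path of $L$ vertices of degree $2$ very likely forces an element of order growing with $L$, and your identification of the bottleneck difficulty there is accurate. The genuine gap is in your third step, and it is not the ``secondary difficulty'' you flag but a structural failure of the method. The chain
\[
\alpha^{|V(G^*)|}\ \le\ \tau(G^*)\ \le\ \tau(G)\ =\ |\jac G|\ \le\ k^{|V(G)|-1}
\]
yields no bound on $|V(G^*)|$: combining your ear-length bound with $\Delta(G)\le k$ (which does follow from the paper's lemma) gives $|V(G)|\le C_k\,|V(G^*)|$, and then the displayed inequality collapses to $\alpha\le k^{C_k}$, a relation with no dependence on $|V(G^*)|$ whatsoever. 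Concretely, the ``many short ears'' case already contains all $3$-regular biconnected graphs (where $G=G^*$ and every ear has length $1$), and for these your argument says only that $\exp(\jac G)\ge \tau(G)^{1/(|V|-1)}$, a quantity that need not tend to infinity with $|V|$. This case is exactly where the conjecture bites, and it is presumably why the paper leaves $k\ge 4$ open.
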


In the case $k = 1$, the cardinality of $\jac G$ would be $1$, and there are only two biconnected graphs with exactly $1$ spanning tree. We prove the conjecture for the cases $k = 2$ and $k = 3$ using Dhar's burning algorithm.

The definition of a Jacobian can also be extended to regular matroids, which leads to the following analogous conjecture:

\begin{conjecture}\label{con:matroidexp}
For every positive integer $k$, there are only finitely many connected regular matroids $M$ such that the exponent of $\jac M$ is at most $k$.
\end{conjecture}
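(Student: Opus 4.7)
The natural strategy for Conjecture~\ref{con:matroidexp} is Seymour's decomposition theorem, which writes every connected regular matroid as a sequence of $1$-, $2$-, and $3$-sums of graphic matroids, cographic matroids, and copies of $R_{10}$. Since $\jac{M}\cong\jac{M^*}$ for regular $M$, cographic pieces reduce to the graphic case via $\jac{M(G)}\cong\jac{G}$, so Conjecture~\ref{con:graphexp} becomes the engine that drives the matroid conjecture.

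The plan is then as follows. First, check that the Jacobian of a $1$-sum decomposes as a direct sum, so that exponent at most $k$ transfers to each summand. Next, using a totally unimodular representation, derive a short exact sequence expressing $\jac{M_1\oplus_2 M_2}$ (and similarly for $\oplus_3$) in terms of the Jacobians of $M_1$ and $M_2$ with the shared elements deleted or contracted, and deduce that if the total Jacobian has exponent at most $k$ then each piece has exponent at most some $f(k)$. Invoke Conjecture~\ref{con:graphexp} at exponent $f(k)$ to conclude that only finitely many graphic (hence cographic) pieces can appear, note that $R_{10}$ contributes only one additional option, and finally bound the number of gluings by showing that each additional $2$- or $3$-sum with a nontrivial piece either raises the exponent or is highly constrained.

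The main obstacle is the middle step. For $1$-sums the analysis mirrors the wedge sum of graphs exactly, but $2$- and $3$-sums share elements between pieces, so the Jacobian fits into an extension rather than a direct sum, and one must bound the exponent of each piece from the exponent of the extension without losing control. A secondary difficulty is that a priori many small pieces of bounded exponent might combine into an arbitrarily large matroid whose Jacobian still has exponent at most $k$; ruling this out requires showing that each extra sum either strictly grows the exponent or pins down the structure rigidly.

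For the concrete case $k=2$ treated in this paper, these obstacles ease considerably: exponent $2$ forces $\jac{M}$ to be an $\mathbb{F}_2$-vector space, so one can work entirely with the matroid's cycle and cocycle spaces modulo $2$, and the finite list of pieces and admissible gluings should then fall out by direct enumeration, yielding both the finiteness result and the promised characterization.
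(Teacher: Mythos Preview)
Your approach via Seymour's decomposition is genuinely different from the paper's and, as you yourself flag, incomplete at the crucial steps. The paper never touches Seymour's theorem or Conjecture~\ref{con:graphexp}; instead, for $k=2$ it works directly with the orthogonal projection $P:C^1\to B$ onto the cut space. Since $\jac{M}\cong B_I^\#/B_I$ and $P(C_I)=B_I^\#$, exponent $2$ forces every entry of $P$ to lie in $\{-\tfrac12,0,\tfrac12\}$. The idempotent relation $P^2=P$ together with symmetry then gives $\sum_j P_{ij}^2=P_{ii}$, which forces $P_{ii}=\tfrac12$ (looplessness rules out $P_{ii}=0$) and exactly one other nonzero entry in each row. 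This pairs the ground-set elements into two-element circuits, so a connected $M$ has at most two elements. The argument is short, self-contained, and yields the full characterization without any appeal to the graphical conjecture.

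Your plan, by contrast, leaves two genuine gaps that you name but do not close. First, the exponent-transfer step across $2$- and $3$-sums is never established: you need a concrete short exact sequence and a bound $f(k)$ such that exponent $\le k$ on $M_1\oplus_2 M_2$ (or $\oplus_3$) forces exponent $\le f(k)$ on each $M_i$, and neither is written down. Second, even granting finitely many admissible building blocks, you give no argument that only finitely many assemblies keep the total exponent at $2$; ``direct enumeration'' over the $\mathbb{F}_2$ cycle/cocycle spaces is asserted rather than carried out, and it is exactly here that arbitrarily long chains of $2$-sums must be excluded. For general $k$ your scheme additionally presupposes Conjecture~\ref{con:graphexp} at level $f(k)$, which the paper only establishes for $k\le 3$, so at best you would be outlining a reduction of Conjecture~\ref{con:matroidexp} to Conjecture~\ref{con:graphexp}, not a proof.
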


As all graphic matroids are regular matroids, this means that Conjecture~\ref{con:matroidexp} implies Conjecture~\ref{con:graphexp}. We prove Conjecture~\ref{con:matroidexp} for the case $k = 2$.

\section{Background}

\subsection{Divisor theory and the Jacobian}
\label{ssec:jacobian}
For the rest of this subsection, $G = (V, E)$ will represent a finite, undirected multigraph possibly with multi-edges but no self-loops. In analogy to divisors defined on Riemann surfaces, we can define divisors on graphs. A \emph{divisor} on $G$ is an integral linear combination of vertices, written as a formal sum
$$D = \sum_{v \in V} D(v)v.$$
The \emph{degree} of a divisor $D$ is
$$\deg(D) = \sum_{v \in V} D(v).$$
We distinguish $\deg(D)$ from the degree (valency) of a vertex $v \in V$ by using $\deg_G(v)$ for this instead.

Divisors can also be thought of as a configuration of sand grains on each vertex, where $D(v)$ counts the number of grains on a vertex $v$ if it is positive, and is a pit that can catch $-D(v)$ grains if it is negative. The divisors of a graph form an abelian group $\Div{G}$ under addition, of which the divisors with degree $0$ form a subgroup $\Divo{G}$.

Let $f$ be a function $f : V \to \ZZ$. Each such $f$ defines a divisor $\div{f}$, where $\div{f}(v)$ is defined as
$$\div{f}(v) = \sum_{\{v, w\} \in E(G)} (f(v) - f(w)) = \deg_G(v)f(v) - \sum_{\{v, w\} \in E} f(w).$$
Suppose that $D$ is a divisor for which there exists an $f$ such that $D = \div{f}$. Then $D$ is called a \emph{principal divisor}. We call two divisors $D_1$ and $D_2$ \emph{linearly equivalent}, denoted by $D_1 \sim D_2$, if their difference is principal.

Another way to think of linear equivalence is through \emph{sandpiles}. Going back to the sand grain analogy, we can \emph{topple} a vertex $v$ by removing $\deg_G(v)$ sand grains on the vertex and adding one sand grain to each of the neighbors of $v$. We can also topple in reverse by taking one sand grain from each of the neighbors of $v$ and adding $\deg_G(v)$ sand grains on $v$.

Then two divisors are linearly equivalent if one can be obtained from other through a series of topples. In particular, if $f : V \to \ZZ$, then we get the divisor $\div{f}$ if, starting with the zero divisor, vertex $v$ is toppled $f(v)$ times. When $f(v)$ is negative, then we topple in reverse that many times. Similarly, if $D_1 \sim D_2$, there exists some $f$ such that $D_1 + \div{f} = D_2$. This can be interpreted as $D_2$ being the result when, starting with $D_1$, vertex $v$ is toppled $f(v)$ times. Equivalently, since $\div{f}$ is a principal divisor, $D_1 \sim D_2$ if their difference is principal.

The set of all principal divisors is denoted by $\prin{G}$, which is a subgroup of $\Divo{G}$. Then $\jac{G}$ is defined as $\Divo{G}/\prin{G}$. The Jacobian of a connected graph is always a finite abelian group, with order equal to the number of spanning trees in $G$ (see for example \cite{big97}).

\subsection{The cycle space and cut space}
\label{ssec:cycle_cut}
This definition of the Jacobian, while easy to visualize, cannot be directly generalized to regular matroids, which do not have a concept of vertices. We then describe a definition of the Jacobian using solely the edges, cycles, and cuts of $G$, as defined in \cite{big97}.

Let $n = \abs{V}$ and $m = \abs{E}$. We will arbitrarily orient each of the edges of $G$. For each edge $e=\{v,w\}$, we pick one of $v$ and $w$ to be the \emph{head} of $e$, denoted $h(e)$. The other vertex incident to $e$ is called the \emph{tail} of $e$ and is denoted $t(e)$. This defines an \emph{orientation} of $G$. 

The \emph{incidence matrix} $D=(d_{ve})$ is defined to be the $n\times m$ matrix of $G$, given by 
\[d_{ve}=\begin{cases}1&\text{if }v=h(e)\\-1&\text{if }v=t(e)\\0&\text{otherwise}\end{cases}.\]

Denote by $C^1(G,\RR)$ the vector space of functions $f:E\to\RR$ with inner product given by \[\langle x,y\rangle=\sum_{e\in E}x(e)y(e).\] This is known as the \emph{edge space}, and we will abbreviate $C^1(G,\RR)$ as $C^1$ when the context is clear. Observe that the incidence matrix $D$ is a function on $C^1$.

Let $Z=\ker D\subseteq C^1$, also called the \emph{cycle space} of $G$. Let $B=Z^\perp$ be the orthogonal complement of $Z$ under the inner product defined above, also called the \emph{cut space}. These spaces are all well-defined (i.e. independent of the choice of orientation) up to isomorphisms, and it follows from this definition that \[C^1(G,\RR)=Z\oplus B.\]

The names cycle space and cut space come from the following interpretation. For any \emph{cycle} $Q$, consider the unique function $z_Q$ on $E$ that takes an edge $e$ to $1$ if it is on the cycle and is oriented in the same direction as it, $-1$ if it oriented in the opposite direction, and $0$ otherwise. Formally, a cycle $z_Q \in C^1$ is the function
\[
z_Q(e) = \begin{cases}
1 & \text{if } t(e), e, h(e) \text{ is in $Q$, in that order} \\
-1 & \text{if } h(e), e, t(e) \text{ is in $Q$, in that order} \\
0 & \text{otherwise}
\end{cases}.
\]
We can think of $z_Q$ as a signed characteristic function for the cycle $Q$. This is defined such that $D(z_Q) = 0$ for any cycle $z_Q$. It can be shown that the \emph{cycle space} $Z=\ker D$ consists of all linear combinations of cycles.

Suppose $U$ is a nonempty proper subset of $V$. A \emph{cut} on $U$ is a function on $E$ that takes an edge $e$ to $1$ if only its head is in $U$, $-1$ if only its tail is in $U$, and $0$ otherwise. More formally, a cut $b_U\in C^1$ is the function \[
b_U(e) = \begin{cases}
1 & \text{if $h(e)$ is in $U$ and $t(e)$ is not} \\
-1 & \text{if $t(e)$ is in $U$ and $h(e)$ is not} \\ 
0 & \text{otherwise}
\end{cases}.\]
This function is a signed characteristic function for the cut determined by $U$. The \emph{cut space} $B$ is formed from linear combinations of cuts $b_U$. It is a well-known theorem that this is identical to the previous definition of $B$ (see for example \cite{big97}). In particular, we know that $Z$ and $B$ are orthogonal complements.

Because of this, any element in $C^1$ can be written as a sum of an element in $Z$ and an element in $B$. In the following figure, an element $c \in C^1$ is on the left; to its right are $z_c \in Z$ and $b_c \in B$ such that $c = z_c + b_c$. Below these show the decomposition of $z_c$ as a sum of cycles and $b_c$ as a sum of cuts.

\begin{center}
    \includegraphics[width = 0.8 \textwidth]{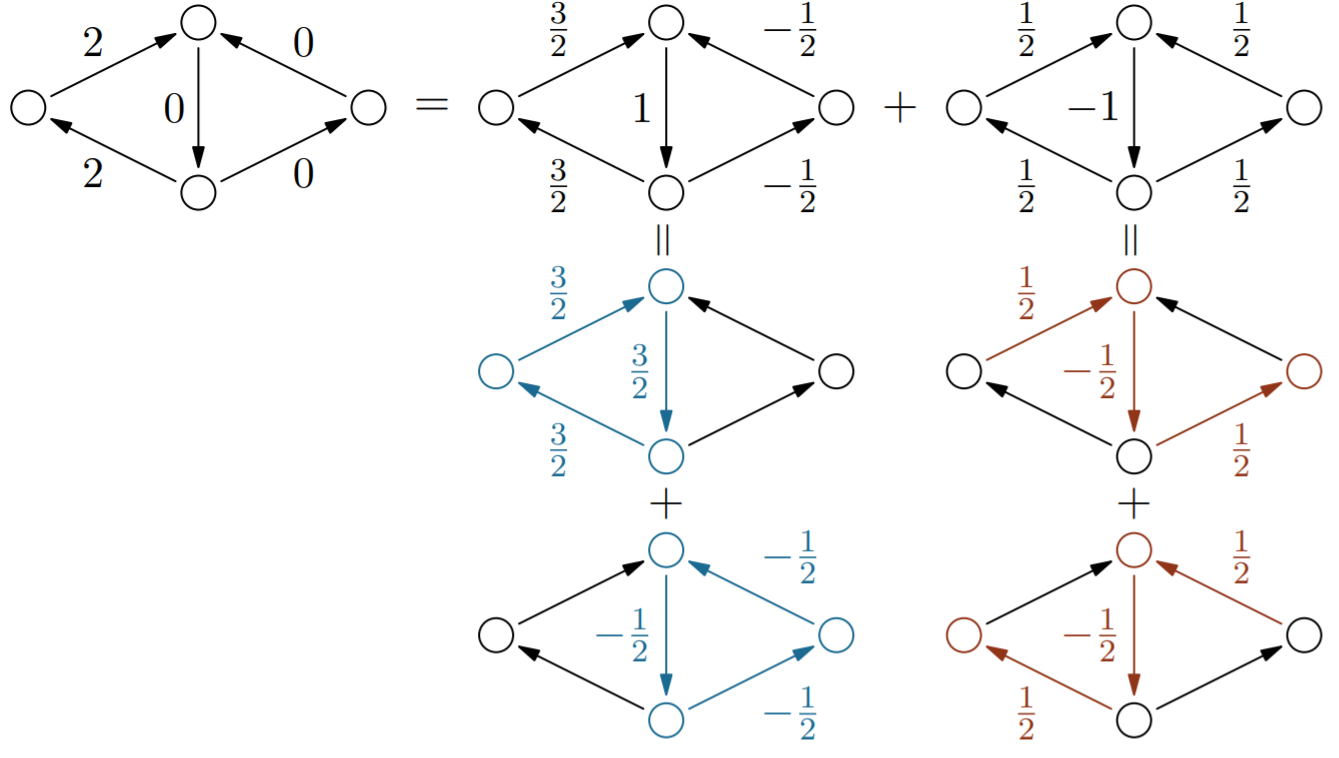}
\end{center}

We can also define the lattices of each of these spaces, consisting of the elements where all coordinates are integers. Let $C^1(G, \ZZ) = C_I$ be the \emph{edge lattice}, $Z_I$ be the \emph{cycle lattice}, and $B_I$ be the \emph{cut lattice}.

While it is true that every element in $C^1$ can be written as a sum of elements in $Z$ and $B$, it is not true that every element in $C_I$ is a sum of elements in $Z_I$ and $B_I$, as the previous example shows. However, it is true that $Z_I \oplus B_I$ is a sublattice of $C_I$.

\subsection{The projection matrix}
\label{ssec:proj}
As mentioned previously, because $C^1=Z\oplus B$, it follows that for any $c\in C^1$, we can find $z_c\in Z$ and $b_c\in B$ such that $c=z_c+b_c$. Let $P$ be the orthogonal projection $P:C^1\to B$ taking $c$ to $b_c$, which we will call the \emph{projection matrix}. From linear algebra, we know that $P$ is a linear transformation; its properties will be crucial to the proof of the conjecture for regular matroids in the case $k = 2$. 

There is a canonical factorization of $P$ given by $P=XD$, where $D$ is the incidence matrix and $X$ is an isomorphism from $\im D$ to $\im{D^t}$. To see this, observe that $\ker D=Z$ implies that $\im{D^t}=Z^\perp=B$. It follows from this that the diagram 
\[
\begin{tikzcd}
    C^1 \arrow{r}{P} \arrow[swap]{d}{D} & B \arrow{d}{=} \\
    \im D \arrow{r}{X} & \im{D^t}
\end{tikzcd}
\] 
is commutative. But because $\im P$ and $\im D$ have the same dimension, it follows that $X$ is an isomorphism between $\im D$ and $\im{D^t}$ (see for example \cite{big97}).

Recall from Section~\ref{ssec:cycle_cut} that although $C^1=Z\oplus B$, it is not the case that $C_I = Z_I\oplus B_I$. However, note that $C^1/(Z\oplus B)$ and $\im P/B$ are isomorphic; indeed, they are both the same as the trivial group. Similarly, if we let $P_I$ be the restriction of $P$ onto the edge lattice $C_I$, then there is an isomorphism induced by $P$, namely \[P_*:\frac{C_I}{Z_I\oplus B_I}\xrightarrow{\sim}\frac{\im{P_I}}{B_I},\] which is given by mapping the coset $[c]$ to the coset $[Pc]$. 

To see this, observe that a function $c\in C_I$ is in $Z_I\oplus B_I$ if and only if $Pc\in B_I$. In particular, if $Pc\in B_I\subset B$, then $c-Pc\in Z$ must have integer coordinates and is in $Z_I$. Thus $c\in Z_I\oplus B_I$. Conversely, if $c\in Z_I\oplus B_I$, then because $c=(c-Pc)+Pc$ is a decomposition of $c$, it follows that $Pc\in B_I$. 

This fact implies that the function taking $[c]$ to $[Pc]$ must be injective. That it is surjective follows from the fact that for any element $[Pc]\in\im{P_I}/B$, we know that $[c]\in P_*^{-1}([Pc])$. Finally, notice that \[P_*([c_1]+[c_2])=\sbr{(c_1+c_2)}=P_*(c_1+c_2)\] implies that $P_*$ is an isomorphism.

We will use without proof the fact that the image of $P_I$ is simply the \emph{dual lattice} $B_I^\#$ of $B_I$, which is given by \[B_I^\#=\{x\in B:\langle x,b\rangle\in\ZZ\text{ for all }b\in B_I\}.\]

It turns out that \[\jac G=\Divo G/\prin G=D(C_I)/D(B_I) \cong B_I^\#/B_I.\] For a proof of this see Appendix~\ref{a:equiv}. The alternate definitions of the Jacobian will generalize more readily to regular matroids. For the remainder of this paper, we will use the various definitions of the Jacobian interchangeably. 

\subsection{Matroid theory}

A finite \emph{matroid} $M$ is a pair $(E,\II)$, where the \emph{ground set} $E$ is a finite set and the \emph{independent sets} $\II$ is a family of subsets of $E$, that satisfies the following properties:
\begin{enumerate}
    \item The empty set is independent.
    \item If $S \in \II$ and $S_1 \subset S$, then $S_1 \in \II$.
    \item If $S, T \in \II$ and $|S| > |T|$, then there exists $s \in S\setminus T$ such that $T \cup \{s\} \in \II$.
\end{enumerate}

We define the set of \emph{bases} $\mathcal B$ of a matroid to be the set of maximal independent sets of $\mathcal I$ and define the set of \emph{circuits} $\mathcal C$ to be the minimal dependent sets.

Matroids can be thought of as generalizations of graphs. In particular, given any graph $G$, there is a corresponding matroid, namely the matroid whose ground set is the set of edges and whose bases are the spanning forests of $G$. Such a matroid is known as a \emph{graphic matroid}. Observe that the independent sets of a graphic matroid are realizable as the subforests of the graph. 

Matroids also generalize the notion of matrices. A \emph{linear matroid} is a matroid derived from a matrix over a given field. Its ground set $E$ is the set of column vectors of the matrix and $\II$ be the set of linearly independent elements of $E$. We call a matroid \emph{regular} if it can be represented as a linear matroid over all fields. This is equivalent to it having a representation over $\RR$ as a \emph{totally unimodular matrix}, which is a matrix where the determinant of every square submatrix is either $-1$, $0$, or $1$.

Consider an oriented incidence matrix corresponding to a graph. It can be shown that the linear matroid defined by the incidence matrix is isomorphic to the graphic matroid defined by the graph, regardless of on which field the matrix is defined. Thus all graphic matroids are regular matroids. On the other hand, there exist regular matroids that are not graphic \cite{oxley}.

Suppose a regular matroid $M$ is represented over $\RR$ by a totally unimodular matrix $D$. We define $C^1 = \RR^E$ as its \emph{edge space}, $Z = \ker D$ as its \emph{cycle space}, and $B = Z^{\perp}$ as the \emph{cut space}, in analogy to the definitions for graphs. We similarly define the \emph{edge lattice} $C_I$, the \emph{cycle lattice} $Z_I$, and the \emph{cut lattice} $B_I$, as well as the \emph{projection matrix} $P$. We then define the matroid's \emph{Jacobian} to be
$$\jac{M} = D(C_I)/D(B_I) \cong B_I^{\#}/B_I.$$
We know these are isomorphic through the earlier proofs, which do not depend on the structure of the graph $G$, but on $C^1$, $Z$, $B$, and so on, instead. Hence these definitions match the ones for graphs when the matroid is graphic.

\section{Main results}

\subsection{Dhar's burning algorithm}\label{ssec:dharsburning}

We will prove Conjecture~\ref{con:graphexp} for the cases $k = 2,3$. To do this, we will use the theory of $q$-reduced divisors.

Given a divisor $D$ for some graph, $D$ is a \emph{$q$-reduced divisor} for some vertex $q$ if the graph satisfies the following properties:
\begin{enumerate}
    \item For all vertices $v \neq q, f(v)\ge 0$.
    \item For all nonempty $A\subseteq V\setminus \{q\}$, there exists a $v \in A$ such that $\outdeg_G(v) > f(v)$ where $\outdeg_G(v)$ is the number of edges connecting $v$ to a vertex not in $A$.
\end{enumerate}

Recall from Section~\ref{ssec:jacobian} that $\jac{G} = \Divo{G}/\prin{G}$ where $\prin{G}$ is the group of all principal divisors. Thus, $\jac{G}$ is the set of all equivalence classes of divisors of $G$, where the equivalence relation is linear equivalence. But we know that every element of $\Divo{G}$ is equivalent to exactly one $q$-reduced divisor \cite{bn07}. Thus there is a bijection between the set of $q$-reduced divisors and the elements of $\jac G$.

\emph{Dhar's burning algorithm} allows us to determine whether a given divisor is a $q$-reduced divisor. First, we burn the vertex at $q$. At each step, a vertex $v$ burns if $f(v)$ is less than the number of edges between $v$ and a previously burned vertex. If the entire graph ultimately burns, then Dhar's burning algorithm implies that the original divisor was a $q$-reduced divisor.

\begin{lemma}
    The exponent of $\jac{G}$ is greater than or equal to the maximum degree of a vertex of $G$. 
\end{lemma}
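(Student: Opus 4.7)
The plan is to exhibit a single element of $\jac G$ whose order is at least $d := \max_{v \in V} \deg_G(v)$; since the exponent of a finite abelian group is bounded below by the order of any element, this would suffice. I work in the biconnected setting of Conjecture~\ref{con:graphexp}, so that $G - v$ is connected for every vertex $v$.

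First, I would fix a vertex $v$ with $\deg_G(v) = d$ and pick any neighbor $q$ of $v$. The key object would be the divisor $D = v - q \in \Divo{G}$, and my claim is that for each $k \in \{0, 1, \ldots, d-1\}$ the scalar multiple $kD = k\cdot v - k\cdot q$ is $q$-reduced. To verify this I would run Dhar's burning algorithm starting from $q$: every vertex $w \ne v, q$ satisfies $(kD)(w) = 0$, and so it ignites as soon as a single incident edge touches the burned set. Because $G - v$ is connected, the fire spreads from $q$ through all of $V \setminus \{v\}$ without ever using $v$. At that stage all $d$ edges at $v$ lead to burned vertices, and since $(kD)(v) = k < d$ the vertex $v$ burns as well. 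Thus Dhar's algorithm consumes the entire graph, so $kD$ is $q$-reduced.

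Uniqueness of $q$-reduced representatives (\cite{bn07}) would then force the $d$ divisors $0,\ D,\ 2D,\ \ldots,\ (d-1)D$ to represent $d$ distinct classes in $\jac G$. In particular $[D]$ has order at least $d$ in $\jac G$, and therefore the exponent of $\jac G$ is at least $d$.

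The only real subtlety is the Dhar verification, where one must guarantee that the fire can reach every vertex other than $v$ before being forced to ignite $v$ itself; this is exactly what biconnectedness provides, since each $w \in V \setminus \{v,q\}$ is linked to $q$ by a $v$-avoiding path. Once that is established, the strict inequality $k < d$ automatically finishes the final burning step, and everything else reduces to invoking the uniqueness of $q$-reduced representatives.
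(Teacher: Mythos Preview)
Your proof is correct and follows essentially the same approach as the paper's: exhibit the divisors $kD = k\,(v) - k\,(q)$ for $0 \le k < d$, verify via Dhar's algorithm that each is $q$-reduced, and invoke uniqueness of $q$-reduced representatives to conclude that $[D]$ has order at least $d$. In fact you are more careful than the paper, which simply asserts that Dhar's algorithm succeeds without explaining why; you correctly identify that biconnectedness (so that $G-v$ is connected) is what makes the fire reach every vertex of $V\setminus\{v\}$ before $v$ must be tested, and your extra stipulation that $q$ be a neighbor of $v$ is harmless though never actually used.
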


\begin{proof}
    Let $v$ be a vertex of maximal degree in $G$ and let $q$ be any other vertex. Then for each $i\in\{0,1,\dots,\deg_G(v)-1\}$, consider the divisor $D_i=i(v)-i(q)$.

    By Dhar's burning algorithm, each of these $\deg_G(v)$ divisors are unique $q$-reduced divisors. As a result, each divisor $D_i$ corresponds to a unique element of $\jac{G}$. 

    Since $D_i=iD_1$, and none of $D_0,D_1, D_2, \ldots ,D_{\deg_G(v)-1}$ are equivalent, the order of $D_1$ is greater than or equal to $\deg_G(v)$ and hence the exponent of $\jac G$ is also greater than or equal to $\deg_G(v)$.
\end{proof}

It follows that for a biconnected graph to have exponent $2$, the degree of any vertex must be $2$. Thus, it must be a cycle. However, the exponent of $C_n$ is equal to $n$, so the only graph with exponent $2$ is $C_2$.

Furthermore, \cite[Lemma 29]{gjr14} tells us that if the exponent of $\jac{G}$ is equal to the maximum degree of a vertex of a biconnected graph $G$, then it must be a banana graph, a graph with two vertices and some number of edges between them. Thus for the case of $k=3$, we know that either the maximum degree of a vertex is three in which case it is the banana graph with three edges, or the maximum degree of a vertex is less than or equal to two which means it is a $3$-cycle. 

\subsection{Extension to regular matroids}\label{ssec:matroidsproof}

We now prove Conjecture~\ref{con:matroidexp} for the case $k = 2$. Let $M$ be a connected regular matroid with ground set $E$, represented over $\RR$ by a totally unimodular matrix $D$. The key idea in the proof will be to use the properties of the projection matrix $P$. Recall that $P$ is a map from $C^1 \to B_I^\#$, and let $n=|E|$, so that $P$ is an $n \times n$ matrix.

Let $e_i$ be a vector in $C^1$ that is $1$ at the $i$th coordinate, with all other coordinates equal to $0$. Note that $De_i$ is just the $i$th column of $D$, and hence may be identified with an element of $E$.

We first begin by characterizing the entries of $P$:

\begin{lemma}\label{lem:denominatortwo}
If $\jac M$ has exponent $2$, then the entries of $P$ are either $-\frac{1}{2}$, $0$, or $\frac{1}{2}$.
\end{lemma}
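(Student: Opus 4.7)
The plan is to combine the lattice identification $\jac M \cong B_I^\#/B_I$ with two basic properties of the orthogonal projection $P$: the columns of $P$ already live in $B_I^\#$, and $P$ is self-adjoint and idempotent.

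First I would apply $P$ to each standard basis vector $e_i \in C_I$. Since $\im{P_I}=B_I^\#$, each column $Pe_i$ lies in $B_I^\#$. The hypothesis that $\jac M \cong B_I^\#/B_I$ has exponent $2$ means $2B_I^\#\subseteq B_I$, so $2Pe_i\in B_I\subseteq C_I$ has integer entries. Hence every entry $P_{ij}$ lies in $\tfrac{1}{2}\ZZ$; this is the easy half of the lemma.

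Next I would bound the diagonal entries using self-adjointness and idempotence. Since $P=P^2=P^T$, we have $P_{ii}=\langle Pe_i,Pe_i\rangle=\|Pe_i\|^2\in[0,1]$, which combined with half-integrality gives $P_{ii}\in\{0,\tfrac{1}{2},1\}$. To upgrade this, observe that $P_{ii}=1$ forces $Pe_i=e_i$, i.e.\ $e_i\in B$. This is equivalent to saying that no element of $Z$ is supported on $i$, which translates into element $i$ of $M$ being a coloop. Connectedness of $M$ (together with the exponent-$2$ hypothesis, which rules out the trivial single-element matroids where $\jac M$ is already trivial) precludes coloops, so $P_{ii}\in\{0,\tfrac{1}{2}\}$.

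Finally, Cauchy-Schwarz applied to the identity $P_{ij}=\langle Pe_i,Pe_j\rangle$ (which follows from $P=P^TP$) gives
$$|P_{ij}|^2 \le \|Pe_i\|^2\|Pe_j\|^2 = P_{ii}P_{jj} \le \tfrac{1}{4},$$
so $|P_{ij}|\le \tfrac{1}{2}$. Combined with $P_{ij}\in\tfrac{1}{2}\ZZ$, this forces $P_{ij}\in\{-\tfrac{1}{2},0,\tfrac{1}{2}\}$. The main obstacle is the diagonal step: translating ``$e_i\in B$'' into ``$i$ is a coloop of $M$'' and then invoking connectedness to exclude this possibility. Everything else is a short manipulation once one notices that the exponent-$2$ hypothesis gives half-integrality column-by-column.
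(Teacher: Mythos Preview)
Your argument is correct and takes a somewhat different route from the paper. Both proofs begin identically: the exponent-$2$ hypothesis together with $\im{P_I}=B_I^\#$ forces every entry of $P$ into $\tfrac{1}{2}\ZZ$. From there the paper cites Maurer's formula $P=\tfrac{1}{\kappa}\sum_{B\in\BB} N_B$ (each $N_B$ a $\{0,\pm1\}$-matrix) to bound all entries by $1$ in absolute value and to obtain symmetry. You instead work entirely from the projection identities $P=P^T=P^TP$: this gives $P_{ii}=\|Pe_i\|^2\in[0,1]$, your coloop argument rules out $P_{ii}=1$ (and it is sound---$P_{ii}=1$ forces $e_i\in B=Z^\perp$, so no element of $\ker D$ is supported on $i$, hence $i$ lies in no circuit; connectedness with $|E|\ge 2$, which the exponent-$2$ hypothesis guarantees, then excludes coloops), and Cauchy--Schwarz handles the off-diagonal entries via $|P_{ij}|^2\le P_{ii}P_{jj}\le\tfrac{1}{4}$. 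Your approach is more self-contained, needing no external reference, and it explicitly excludes entries equal to $\pm1$, a point the paper's bound ``between $-1$ and $1$'' combined with half-integrality does not by itself settle. The trade-off is that you import the coloop step into this lemma, whereas the paper defers all diagonal analysis to Lemma~\ref{lem:twononzero}, where the same identity $\sum_j P_{ij}^2=P_{ii}$ that underlies your Cauchy--Schwarz bound is used directly.
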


\begin{proof}
We will use the fact that the entries of $P$ are given by the explicit formula $\frac1\kappa\sum_{B\in\BB}N_B$ where $\kappa$ is the number of bases and for each $B\in\BB$, the matrix $N_B$ is a specific matrix whose entries are all $0$, $1$, or $-1$, from which it follows that the entries of $P$ are between $-1$ and $1$ and that $P$ is symmetric \cite{maurer}.

Since $\jac M$ has exponent at most $2$, every element $x\in\jac M$ must satisfy $2x=0$. Because $\jac M=B_I^\#/B_I$, we know that $2b\in B_I$ for any element $b\in B_I^\#$. By considering $B_I^\#$ to be a subgroup of $\RR^n$, it follows that each element of $B_I^\#$ must have coordinates with denominator at most $2$.

Recall now that $P(C_I)=B_I^\#$. For each $i$, we know that $e_i\in C_I$, so $P(e_i)\in B_I^\#$. Thus the coordinates of $P(e_i)$ have denominator at most $2$.

But $P(e_i)$ is simply the $i$th column of the projection matrix $P$. Thus we conclude that the entries of $P$ are either $-\frac12$, $0$, or $\frac12$.
\end{proof}

We then characterize $P$ even further, using the fact that $P$ is a projection matrix:

\begin{lemma}\label{lem:twononzero}
Each diagonal entry of $P$ is $\frac12$, and each row has exactly two nonzero entries.
\end{lemma}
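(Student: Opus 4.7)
The plan is to combine Lemma \ref{lem:denominatortwo} with two further algebraic facts about $P$: it is symmetric (as noted via \cite{maurer}) and idempotent ($P^2 = P$), since it is the orthogonal projection of $C^1$ onto $B$. Together with the constraint $P_{ij}\in\{-\tfrac12,0,\tfrac12\}$ already in hand, these will pin down the row structure almost immediately.

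The key computation is the $(i,i)$ entry of the identity $P=P^2$. Using symmetry,
\[
P_{ii} = (P^2)_{ii} = \sum_j P_{ij}P_{ji} = \sum_j P_{ij}^2.
\]
Let $k_i$ denote the number of nonzero entries in row $i$. Since each nonzero $P_{ij}^2$ equals $\tfrac14$, this gives $P_{ii}=k_i/4$. Combined with $P_{ii}\in\{-\tfrac12,0,\tfrac12\}$ and $k_i\ge 0$, the only possibilities are $k_i=0$ (forcing $P_{ii}=0$) or $k_i=2$ (forcing $P_{ii}=\tfrac12$). In particular, whenever $k_i=2$ the diagonal entry itself must be one of the two nonzero entries.

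It remains to exclude $k_i=0$. If the $i$-th row of $P$ vanishes, then symmetry forces the $i$-th column to vanish as well, so $Pe_i=0$, i.e.\ $e_i\in\ker P = Z = \ker D$. Thus the $i$-th column of $D$ is zero, which says that the ground-set element indexed by $i$ is a loop of $M$. But a connected matroid with $\abs{E}\ge 2$ has no loops: a loop $\{e\}$ is itself a circuit and so cannot share a circuit with any other element, contradicting connectedness. (The edge case $\abs{E}=1$ is easily dismissed: a single non-loop element gives $P=(1)$, already violating Lemma \ref{lem:denominatortwo}, while a single loop yields a trivial Jacobian, so exponent $2$ is not attained.) Hence $k_i=2$ and $P_{ii}=\tfrac12$ for every $i$.

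The only real subtlety is the loop-exclusion step, which is where the connectedness hypothesis on $M$ is used; everything else follows mechanically from idempotency of $P$ and the denominator bound of Lemma \ref{lem:denominatortwo}.
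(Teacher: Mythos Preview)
Your argument is correct and essentially identical to the paper's: both compute the diagonal of $P=P^2$ using symmetry to obtain $P_{ii}=\sum_j P_{ij}^2$, combine this with the denominator bound from Lemma~\ref{lem:denominatortwo} to force $P_{ii}\in\{0,\tfrac12\}$, and then rule out $P_{ii}=0$ by observing that a zero column of $P$ forces a zero column of $D$, i.e.\ a loop. Your treatment of the loop-exclusion step (deriving looplessness from connectedness and handling $\lvert E\rvert=1$ separately) is slightly more explicit than the paper's, but the substance is the same.
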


\begin{proof}
We first prove that the diagonal entries are $\frac12$. As $P$ is a projection, we see $P^2 = P$. This is because all vectors in the image of $P$ are already in the cut space, so projecting them again to the cut space would result in the same thing. From here, it follows from the fact that $P$ is symmetric and matrix multiplication that
\begin{equation*}
    P_{i, 1}^2 + P_{i, 2}^2 + \cdots + P_{i, i}^2 + \cdots + P_{i, n}^2 = P_{i, i}. \tag{$\star$}
\end{equation*}
Thus $P_{i, i} \ge 0$.

If $P_{i, i} = 0$, then all the entries in the $i$th column are $0$, and hence $Pe_i = 0$ as well. Thus $e_i \in \ker P$. As $P$ is a projection to the cut space $B$, it follows $e_i$ is orthogonal to $B$, so $e_i \in Z$. But $Z = \ker D$, so $De_i = 0$, implying that the $i$th column of $D$ is $0$. But any set of linearly independent vectors cannot include zero which contradicts $M$ being loopless, so by Lemma~\ref{lem:denominatortwo}, $P_{i, i} = \frac12$.

We then show that there is exactly one $j \neq i$ such that $\abs{P_{i, j}} = \frac12$. Indeed, because $P_{i, i} = \frac12$, substituting into $(\star)$ shows that
$$P_{i, 1}^2 + P_{i, 2}^2 + \cdots + P_{i, i-1}^2 + P_{i, i+1}^2 + \cdots + P_{i, n}^2 = \frac14.$$
But again by Lemma~\ref{lem:denominatortwo}, each of the $P_{i, j}$s is either $-\frac12$, $0$, or $\frac12$. Since the sum of their squares is $\frac14$, it follows that at most one of these is $-\frac12$ or $\frac12$, and the rest of the entries are $0$, finishing the proof.
\end{proof}

We now finish the proof of Conjecture~\ref{con:matroidexp} for $k = 2$ from Lemma~\ref{lem:denominatortwo} and Lemma~\ref{lem:twononzero}:

\begin{proof}[Proof of Conjecture~\ref{con:matroidexp} for $k=2$]
Without loss of generality, choose $i < j$ such that $P_{i, j} = -\frac12$. By symmetry of $P$, $P_{j, i} = -\frac12$. Then
$$P = \begin{bmatrix}      & \vdots   &        & \vdots   &        \\
     \cdots & \frac12  & \cdots & -\frac12 & \cdots \\
            & \vdots   &        & \vdots   &        \\
     \cdots & -\frac12 & \cdots & \frac12  & \cdots \\
            & \vdots   &        & \vdots   &        \end{bmatrix}.$$
The indicated columns are the $i$th and $j$th columns, and all other entries in these columns are zero. Similarly, the $i$th and $j$th rows are shown, and all other entries in these rows are zero as well. Thus $P(e_i + e_j) = 0$, and through similar logic in the proof of Lemma~\ref{lem:twononzero}, $D(e_i + e_j) = 0$, and hence the $i$th and $j$th elements of $E$ form a circuit.

As all the other entries of the $i$th row are zero, then this $i$th element is not contained in any other circuit as well, and similarly for the $j$th element. As $M$ is connected, there can be no other elements in the matroid. 

Thus either $M$ contains one element or two elements, and there are only finitely many possibilities.
\end{proof}

Note that this proof does not rely on the fact that $M$ is connected up until the end. In fact, we can modify this proof to get a full characterization of all regular matroids and graphs whose Jacobians have exponent $2$. In particular, we see that any graph with exponent $2$ must be a tree with some edges that are doubled.

\section{Future work}

The approach in Section~\ref{ssec:dharsburning} does not seem easy to generalize beyond the case $k = 3$. In contrast, the approach detailed in Section~\ref{ssec:matroidsproof} does appear to be more readily generalizable.

For example, when $k = 3$, the proof in Lemma~\ref{lem:denominatortwo} can be adapted to show that the entries of $P$ are $0, \pm\frac{1}{3}, \pm\frac{2}{3}$. In a similar manner, Lemma~\ref{lem:twononzero} can be adapted to show that the diagonal entries are $\frac{1}{3}$ or $\frac{2}{3}$, each row and column has exactly three nonzero entries, and all off-diagonal entries have to be $\pm\frac{1}{3}$. However, the authors do not immediately see how this leads to a characterization of the matroid $M$.

\section*{Acknowledgements}

The authors would like to thank Matthew Baker and Elizabeth Pratt for their constant guidance and support throughout our research. We especially thank Matthew Baker and Farbod Shokrieh for their idea in the graphical case from which the proof of Conjecture~\ref{con:matroidexp} for the case $k=2$ was based on. We would also like to thank the PROMYS program and the Clay Mathematics Institute for giving us this research opportunity. Finally, we appreciate David Fried and Roger Van Peski for organizing this research opportunity in the PROMYS program. This paper also benefited from the suggestions of an anonymous referee.

\appendix

\section{Equivalence of Jacobian definitions}
\label{a:equiv}

We stated without proof in Section~\ref{ssec:proj} that if we define $\jac G$ to be the quotient $\Divo G/\prin G$, then \[\jac G=C_I/(Z_I\oplus B_I)\cong B_I^\#/B_I.\] We prove this statement now.

Let $C^0(G,\ZZ)$ be the space of integer-valued functions on $V$, which we will abbreviate as $C^0$. Then consider the integer-valued function $\sig:C^0\to\ZZ$ defined by \[\sig(\psi)=\sum_v\psi(v).\] Let $D_I$ be the restriction of $D$ to $C_I$ so that $D_I:C_I\to C^0$. Then it has been shown that the sequence
\[
\begin{tikzcd}
    C_I \arrow{r}{D_I} & C^0 \arrow{r}{\sig} & \ZZ \arrow{r}{} & 0
\end{tikzcd},
\] 
is exact \cite{big97}. In particular, $D(C_I)=\im{D_I}=\ker\sig$. 

\begin{proposition}
The Jacobian of a graph can also be written as $D(C_I)/D(B_I)$. That is, \[\Divo G/\prin G=D(C_I)/D(B_I).\]
\end{proposition}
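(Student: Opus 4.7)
The plan is to establish the two set-level equalities $D(C_I) = \Divo G$ and $D(B_I) = \prin G$; the proposition will then follow immediately by taking quotients.

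For the first equality, I would invoke the exact sequence $C_I \xrightarrow{D_I} C^0 \xrightarrow{\sig} \ZZ \to 0$ cited just above. Exactness at $C^0$ gives $D(C_I) = \ker \sig$, and since $\sig$ agrees with the degree map on $C^0 = \Div G$, we have $\ker \sig = \Divo G$.

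For the second equality, I would first identify $B_I$ with $D^t(C^0(G, \ZZ))$. Over $\RR$ we already have $B = Z^\perp = (\ker D)^\perp = \im{D^t}$, and this passes to the integer lattice in both directions: each generating cut factors as $b_U = D^t \mathbf{1}_U$ for the indicator function $\mathbf{1}_U$, giving $B_I \subseteq D^t(C^0(G,\ZZ))$; conversely, since $D^t$ has integer entries and every $f \in C^0(G,\ZZ)$ decomposes as $\sum_v f(v)\mathbf{1}_{\{v\}}$, its image $D^t f = \sum_v f(v) b_{\{v\}}$ is automatically an integer combination of cuts and hence lies in $B_I$. Next, I would compute $D \circ D^t$ directly from the definitions: a short calculation gives $(D^t f)(e) = f(h(e)) - f(t(e))$, and then grouping the resulting sum over edges incident to $v$ produces $(DD^t f)(v) = \sum_{\{v,w\}\in E}(f(v) - f(w)) = \div{f}(v)$. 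Therefore $D(B_I) = DD^t(C^0(G,\ZZ)) = \prin G$.

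Combining the two equalities yields the proposition on the nose. The step that merits the most care is the identification $B_I = D^t(C^0(G,\ZZ))$: the factorization $b_U = D^t \mathbf{1}_U$ is a quick check, but one must confirm that the $\ZZ$-span of all cuts really equals the full image of $D^t$ on integer vertex functions (and not just a sublattice of it), which is why the indicator-function decomposition of an arbitrary $f \in C^0(G,\ZZ)$ is essential. Everything else reduces to routine manipulation of the incidence matrix and its transpose.
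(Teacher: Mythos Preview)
Your proposal is correct and follows essentially the same strategy as the paper: both arguments establish $D(C_I)=\Divo G$ via the exact sequence and $D(B_I)=\prin G$ by writing elements of $B_I$ as integer combinations of vertex cuts and computing their image under $D$. Your packaging via $B_I = D^t(C^0(G,\ZZ))$ and the Laplacian identity $DD^t f = \div f$ is slightly more streamlined than the paper's coordinate-by-coordinate expansion, but the underlying computation and the one nontrivial implicit step (that every integer point of $B$ is an integer combination of cuts) are identical.
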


\begin{proof}
Note the similarity between $D(C_I)=\ker\sig$ and $\Divo G$. In particular, we know that $\ker\sig$ is the set of integer-valued functions $\psi$ on $V$ such that $\sum_v\psi(v)=0$. On the other hand, $\Divo G$ is the set of divisors, which are simply integer-valued functions on $V$, with degree $0$. From this, it follows that $D(C_I) = \Divo G$.

Now consider the effect of $D$ on a function $f\in C_I$. It takes $f$ to the function $Df$ given by \[(Df)(v)=\sum_{h(e)=v}f(e)-\sum_{t(e)=v}f(e).\] Let $A_v$ denote the set of vertices adjacent to $v$. Then we claim that $(Df)(v)$ is of the form \[(\div g)(v)=\sum_{w\in A_v}\del{g(v)-g(w)},\] precisely when $f\in B_I$. 

If $f\in B_I$, then there exists an integer-valued function $g$ on the vertices such that $f(e)=\sum_{w\in V}g(w)b_w(e)$ where $b_w(e)$ is the cut determined by the vertex $w$ as explained in Section~\ref{ssec:cycle_cut}. But then \[(Df)(v)=\sum_{h(e)=v}\sum_{w\in V}g(w)b_w(e)-\sum_{t(e)=v}\sum_{w\in V}g(w)b_w(e).\] In the first term, the definition of $b_w$ implies that $b_w(e)=1$ when $w=v$. If $h(e)=v$ and $t(e)=w$, then $b_w(e)=-1$. Otherwise, $b_w(e)=0$. Thus, the first term can be rewritten as \[\sum_{h(e)=v}g(v)-\sum_{h(e)=v}\sum_{t(e)=w}g(w).\] Similarly, we find that the second term can be rewritten as \[\sum_{t(e)=v}g(v)-\sum_{t(e)=v}\sum_{h(e)=w}g(w).\] It follows, then, that \[(Df)(v)=\sum_{e=vw}g(v)-\sum_{w\in A_v}g(w)=\sum_{w\in A_v}(g(v)-g(w))=(\div g)(v),\] which was what we wanted.

Conversely, suppose that $(Df)(v)=(\div g)(v)$ for some $g\in C^0(G,\ZZ)$. Then by reversing the previous argument, we see that \[f(e)=\sum_{w\in V}g(w)b_w(e).\] Because $g$ and $b_w$ are both integer valued, so too is $f$. Moreover, by definition of $B$, we know that $f\in B$, so $f\in B_I$.

From this, it follows that $D(B_I)=\prin G$. Since $D(C_I)=\Divo G$, it follows that the Jacobian $\Divo G/\prin G$ can also be written as $D(C_I)/D(B_I)$, which was precisely what we wanted to show.
\end{proof}

With this alternate definition of the Jacobian in hand, we are able to prove the following proposition:

\begin{proposition}
Define $X_*:D(C_I)/D(B_I)\to B_I^\#/B_I$ to be the map taking the $[Dc]$ to $[Pc]$ for each $c\in C_I$. Then $X_*$ is an isomorphism. In particular, this implies that $\jac G = B_I^\#/B_I$.
\end{proposition}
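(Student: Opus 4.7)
The plan is to verify in turn that $X_*$ is well-defined, a group homomorphism, injective, and surjective. The two structural facts I would lean on throughout are consequences of $P$ being the orthogonal projection onto $B$ along $Z$, together with the factorization $P = XD$ from Section~\ref{ssec:proj}: first, for any $c \in C^1$ the vector $c - Pc$ lies in $Z = \ker D$, so that $D(Pc) = Dc$; and second, $P$ restricts to the identity on $B$, so $Pb = b$ whenever $b \in B_I$.

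Well-definedness is the main point. I would take two representatives $Dc, Dc'$ of the same class in $D(C_I)/D(B_I)$, so that $D(c - c') = Db$ for some $b \in B_I$, and observe that $c - c' - b$ then lies in $\ker D = Z$. Applying $P$ and using that $P$ fixes $B$ pointwise yields $Pc - Pc' = Pb = b \in B_I$, so $[Pc] = [Pc']$ in $B_I^\#/B_I$. The homomorphism property is then immediate from linearity of $P$.

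For injectivity, suppose $X_*([Dc]) = [Pc] = 0$, i.e.\ $Pc \in B_I$; then the identity $Dc = D(Pc)$ from the first key fact places $Dc \in D(B_I)$, so $[Dc] = 0$. For surjectivity, I would quote the stated fact $\im{P_I} = B_I^\#$ from Section~\ref{ssec:proj}: any representative $b \in B_I^\#$ is of the form $b = Pc$ for some $c \in C_I$, and then $X_*([Dc]) = [Pc] = [b]$.

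There is no serious obstacle to overcome, as the whole argument is essentially a short diagram chase. The one subtlety worth flagging is that well-definedness genuinely requires both key facts together: the principal divisor $Db$ produced by changing representative must be translated back across the map $X$ into an element of $B_I$, and this happens precisely because $P$ fixes $B$ pointwise, so that $X(Db) = Pb = b$. Once well-definedness is in place, injectivity and surjectivity are close to tautologies given the earlier identification $\im{P_I} = B_I^\#$.
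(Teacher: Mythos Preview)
Your proof is correct. The well-definedness argument is essentially the same as the paper's, but from that point on the two diverge in style. The paper introduces the auxiliary map $D_*:C_I/(Z_I\oplus B_I)\to D(C_I)/D(B_I)$, observes that $P_*=X_*D_*$ where $P_*$ was already shown in Section~\ref{ssec:proj} to be an isomorphism, and concludes that $X_*$ (and $D_*$) must be isomorphisms from the commutativity of the resulting triangle. You instead verify injectivity and surjectivity of $X_*$ by hand, using the identity $Dc=D(Pc)$ for the former and the quoted fact $\im{P_I}=B_I^\#$ for the latter. Your route is more self-contained and avoids the third group $C_I/(Z_I\oplus B_I)$ entirely; the paper's route has the minor advantage of simultaneously establishing that $D_*$ is an isomorphism, though that is not needed for the proposition as stated.
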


\begin{proof}
We follow the proof presented in \cite{big97}.

We know already that the orthogonal projection $P:C^1\to B$ has a factorization $P=XD$ where $X$ is an isomorphism from $\im D$ to $\im{D^t}$ and $D$ is the incidence matrix. We also have that $\im{P_I}=B_I^\#$. This implies that \[X(D(C_I))=P(C_I)=B_I^\#.\] Since $P_I$ is the identity on $B_I$, we find that \[X(D(B_I))=P(B_I)=B_I.\] Thus the induced homomorphism $X_*$ taking $[Dc]\in D(C_I)/D(B_I)$ to $[XDc]=[Pc]\in B_I^\#/B_I$ is a well-defined mapping. After all, if $[Dc_1]=[Dc_2]$ for $c_1,c_2\in C_I$, then there exists some $b\in B_I$ so that $c_1=c_2+b$. But because $Pb\in B_I$ and $Pc_1=Pc_2+Pb$, it follows that $[Pc_1]=[Pc_2]$. 

Similarly, we can check that the map $D_*$ from $C_I/(Z_I\oplus B_I)$ to $D(C_I)/D(B_I)$ defined by $D_*([c])=[Dc]$ is a well-defined homomorphism. Moreover, the map $P_*$ taking $[c]\in C_I/(Z_I\oplus B_I)$ to $[Pc]\in B_I^\#/B_I$ is in fact an isomorphism, as shown in Section~\ref{ssec:proj}. Since $P_*=X_*D_*$, the following diagram commutes:
\[
\begin{tikzcd}
    C_I/(Z_I\oplus B_I) \arrow[swap]{d}{D_*} \arrow{dr}{P_*} \\
    D(C_I)/D(B_I) \arrow[swap]{r}{X_*} & B_I^\#/B_I.
\end{tikzcd}
\]
But because $P_*$ is an isomorphism, it follows that $D_*$ and $X_*$ must also be isomorphisms. So $D(C_I)/D(B_I)$ and $B_I^\#/B_I$ are isomorphic.
\end{proof}

It thus follows that the three definitions of $\jac G$ give rise to isomorphic groups and are therefore consistent.

\end{document}